\DeclareMathAlphabet{\mathcal}{OMS}{cmsy}{m}{n}
\DeclarePairedDelimiter\paren{\lparen}{\rparen}
\newtheorem{prop}{Proposition}
\newtheorem{remark}{Remark}
\newtheorem{wf}{Weak form}
\newtheorem{scheme}{Scheme}
\newcommand{\bbR}{\mathbb{R}}
\newcommand{\px}{\partial_x}
\newcommand{\rmd}{\mathrm{d}}
\newcommand{\calH}{\mathcal{H}}
\newcommand{\fracdel}[2]{\frac{\delta #1}{\delta #2}}
\begin{document}

\title{Energy-preserving $H^1$-Galerkin schemes for the Hunter--Saxton equation}

\author{
Yuto Miyatake\thanks{Graduate School of Engineering, Nagoya University, 
\href{mailto:miyatake@na.nuap.nagoya-u.ac.jp}{miyatake@na.nuap.nagoya-u.ac.jp}}, \
Geonsik Eom, \
Tomohiro Sogabe\thanks{Graduate School of Engineering, Nagoya University,
\href{mailto:sogabe@na.nuap.nagoya-u.ac.jp}{sogabe@na.nuap.nagoya-u.ac.jp}} \
and 
Shao-Liang Zhang\thanks{Graduate School of Engineering, Nagoya University,
\href{mailto:zhang@na.nuap.nagoya-u.ac.jp}{zhang@na.nuap.nagoya-u.ac.jp}}
}


\maketitle

\begin{abstract}
We consider the numerical integration of the Hunter--Saxton equation,
which models the propagation of weakly nonlinear orientation waves.
For the equation,
we present two weak forms and their Galerkin discretizations.
The Galerkin schemes preserve the Hamiltonian of the equation and can be implemented with
cheap $H^1$ elements.
Numerical experiments confirm the effectiveness of the schemes.
\end{abstract}

{\bf Keywords.} Hunter--Saxton equation, energy-preservation, Galerkin methods

\section{Introduction}
\label{sec1}

In this paper, we are concerned with the numerical integration of the Hunter--Saxton (HS) equation~\cite{hs91}:
\begin{align} \label{eq:hs}
u_{xxt} + 2u_x u_{xx} + uu_{xxx} = 0, \qquad x\in \bbR, \quad t>0,
\end{align}
where the subscript $t$ (or $x$, respectively) denotes the differentiation with respect to time variable
$t$ (or space variable $x$). 
The HS equation arises as a model for the propagation of weakly nonlinear orientation waves
in a nematic liquid crystal~\cite{hs91}.
This equation has been attracting much attention during the last two decades,
due to its rich mathematical structures and properties.
For instance, it was shown in~\cite{hz94} that the HS is integrable, has a bi-Hamiltonian structure, and possesses a Lax pair.
In particular, the first two Hamiltonians, associated to the bi-Hamiltonian structure, are
\begin{align}
\calH_1 = \int_\bbR \frac12 u_x^2 \, \rmd x, \quad \calH_2 = \int_\bbR \frac12 uu_x^2 \, \rmd x.
\end{align}
Furthermore, the HS does not have global smooth solutions, but admits conservative and dissipative global weak solutions (conservative case~\cite{hz95}, dissipative case~\cite{bc05}),
and
it can be seen as the high frequency limit of the Camassa--Holm equation~\cite{dp98}.
For more details and some generalized equations, see, for example,~\cite{da11,lenells08,w09}
and references therein.

In spite of such physical and mathematical importance,
not much effort has been devoted to the numerical integration of the HS.
This is quite a contrast to other integrable partial differential equations such as
the KdV and Camassa--Holm equations.
We are aware of only a few numerical studies:
the Enquist--Osher scheme~\cite{hs91}, an upwind finite difference scheme~\cite{hkr07},
some discontinuous Galerkin schemes~\cite{xs09,xs10},
several structure-preserving finite difference schemes~\cite{mc16}.
In general, 
the application of some standard discretization techniques to the HS seems a tough task.
The main difficulties lie on the choice of boundary conditions and 
the treatment of the term $u_{xxt}$.
\begin{itemize}
\item For a numerical simulation, we need to set appropriate space domain and boundary conditions.
We usually consider integrable PDEs on the torus, in other words, we adopt periodic boundary conditions.
However, since it is proved in~\cite{y04} that all strong solutions of the periodic HS blow-up in finite time,
we should consider other scenarios except for the case we are interested in blow-up 
phenomena.
In fact, the existing numerical studies mentioned above did not treat the periodic boundary conditions.

\item In contrast to standard expressions of evolution equations like $u_t = f(u,u_x,u_{xx},\dots )$,
there is 
the operator $\px^2$ in front of $u_t$.
Integrating both sides of the HS \eqref{eq:hs} formally, we then obtain
\begin{align}
(u_t+uu_x)_x - \frac{1}{2} u_x^2 &= a(t), \label{eq:hs1} \\
u_t + uu_x - \px^{-1} \paren*{\frac12 u_x^2 + a(t)} &= h(t) , \label{eq:hs0}
\end{align}
where $\px^{-1}$ is an inverse operator of $\px$, whose definition should depend on the problem setting,
and $a(t)$ and $h(t)$ are the integral constants, which are independent of $x$.
This discussion indicates that the HS of the form \eqref{eq:hs} has two degrees of freedom because of the two constants,
and if we try to discretize the HS of the form \eqref{eq:hs} (or \eqref{eq:hs1}), 
care must be taken for how to determine the solution.
If we focus on the form \eqref{eq:hs0}, we need to introduce a discrete counterpart of $\px^{-1}$
for the numerical integration,
which is also a nontrivial task.
\end{itemize}

In this paper, we are interested in the HS of the form
\begin{align}
u_t + uu_x - \px^{-1} \paren*{\frac12 u_x^2} = 0 , \quad x\in (-L,L), \tag{HS0} \label{hs0}
\end{align}
under the boundary condition $u(-L,t) = u_x(L,t)=0$,
where the operator $\px^{-1}$ is defined by $\px^{-1} (\cdot ) = \int_{-L}^x (\cdot ) \, \rmd x$.
This problem setting corresponds to \eqref{eq:hs0} with $a(t) = h(t)=0$,
and essentially the same setting has been considered in the literature~\cite{hkr07,mc16,xs09,xs10}.
Equivalent problem settings can also be formulated based on the expressions \eqref{eq:hs1} or \eqref{eq:hs}:
\begin{align}
(u_t+uu_x)_x - \frac{1}{2} u_x^2 = 0,  \quad x\in (-L,L), \tag{HS1}  \label{hs1} 
\end{align}
under the boundary condition $u(-L,t) = u_x(L,t)=u_x(-L,t)=0$ or $u(-L,t) = u_x(L,t)=u_{xx}(L,t)=0$;
\begin{align}
u_{xxt} + 2u_x u_{xx} + uu_{xxx} = 0,  \quad x\in (-L,L), \tag{HS2}  \label{hs2} 
\end{align}
under the boundary condition $u(-L,t) = u_x(L,t)=u_x(-L,t)=u_{xx}(L,t)=0$.
The additional boundary conditions $u_x(-L,t)=u_{xx}(L,t)=0$ are introduced
to make the solution unique.
The equivalence of these settings was proved in~\cite{mc16}.
In these settings the first Hamiltonian $\calH_1$
is constant along the solution, but the second $\calH_2$ is not (see Remark~\ref{rem1}).
Hereafter, $\calH_1$ is often referred to as the energy.

For the above problem settings, the aim of this paper is to
derive $\calH_1$-preserving Galerkin schemes for the HS,
as extensions of the $\calH_1$-preserving finite difference scheme~\cite{mc16}.
While the energy-preserving finite difference scheme is stable and produce qualitatively nice numerical solutions,
it requires the use of the uniform mesh, which is apparently less practical for some characteristic solutions of the HS (see Fig.~\ref{fig-fd} for example).
Moreover, oscillations are observed for the numerical profile $u_x$.
These drawbacks motivate us to consider Galerkin extensions of the energy-preserving finite difference scheme. 
We also note that the HS admits non-smooth weak solutions, and thus 
it seems appropriate to derive Galerkin schemes with the $H^1$ (the first order Sobolev space) framework.
 
In the Galerkin context, much effort has been devoted to establish a unified way of constructing
energy-preserving $H^1$-Galerkin schemes for evolution equations with energy-preservation property~\cite{ma08,mi12,mi14}. 
However, the methods still have their limitations and it is not straightforward to apply the methods to the HS
due to the operator $\px^2$ in front of $u_t$.
The most challenging part is to find an appropriate $H^1$-weak form with explicit energy-preservation property.
For example, Hunter and Saxton~\cite{hs91} considered a weak solution $u\in H_{\mathrm{loc}}^1 (\bbR \times \bbR^+)$:
\begin{align*}
\int_{\bbR^+}\int _\bbR \paren*{\phi_{xt} u + \frac12 \phi _{xx} u^2 - \frac12 \phi u_x^2}\,\rmd x \rmd t =0
\end{align*}
for all test functions.
But we do not adopt this expression, since it seems that the energy-preservation property  
cannot be directly established, and the second derivative exists for test functions $\phi$.
Xu and Shu~\cite{xs09} considered a weak form based on the following first order system
\begin{align}
\begin{array}{rl}
q_t + p_x - \frac12(r^2)_x =& \hspace{-0.6em} 0,\\
u_x =& \hspace{-0.6em} r, \\
r_x =& \hspace{-0.6em} q,\\
p =& \hspace{-0.6em} (ru)_x.
\end{array} \label{xswf}
\end{align}
We can derive an energy-preserving $H^1$-Galerkin scheme based on this system,
but three intermediate variables $p,q,r$ require a extra computational effort as will be 
explained in Remark~\ref{rem:cc}.
With these considerations, we present two weak formulations to the HS, and derive
energy-preserving fully-discrete Galerkin schemes.

We would also like to remark the significance of comparing two weak forms and corresponding Galerkin schemes.
In the context of energy-preserving Galerkin methods, 
finding an appropriate weak form has dominated the most important part.
However, a recent study~\cite{mi14} proposed a systematic way of finding intended weak forms
for several evolution equations
(though the technique in~\cite{mi14} cannot be directly applicable to the HS),
and the study indicates that weak forms of interest are not unique.
Hence, it is hoped that a systematic way of finding a \emph{best} weak form
is established, from the practical viewpoint.
This paper tests two different Galerkin schemes in numerical experiments
to compare the underlying weak forms, and we hope our study shall give a new insight
in the study of energy-preserving Galerkin methods.

The paper is organized as follows.
In Section~\ref{sec2}, two variational structures with respect to the first Hamiltonian $\calH_1$
and the $\calH_1$-preserving finite difference scheme~\cite{mc16} are briefly reviewed.
In Section~\ref{sec3}, two $\calH_1$-preserving $H^1$-weak forms and corresponding
Galerkin schemes are presented.
These schemes and the finite difference scheme are compared numerically in Section~\ref{sec4}.
Finally, concluding remarks are given in Section~\ref{sec5}.

\section{The variational structure and energy-preserving finite difference scheme}
\label{sec2}
In this section, we review two variational structures with respect to
the first Hamiltonian $\calH_1$.

\subsection{The variational structure and the energy-preserving finite difference scheme 
based on {\rm (\ref{hs2})}}

The variational structure \cite{hz94} and the  energy-preserving finite difference scheme~\cite{mc16},
based on the setting \eqref{hs2},
are briefly reviewed.
The variational structure to the HS, which was first introduced in \cite{hz94}, is written as
\begin{align}
u_{xxt} = (u_{xx}\px + \px u_{xx}) \px^{-2} \fracdel{\calH_1}{u}, \quad
\calH_1 (u,u_x) = \frac12 \int_{-L}^L u_x^2 \,\rmd x.
\end{align}
As in the usual interpretation, $(u_{xx}\px + \px u_{xx})$ operates on a function $f$
such that $(u_{xx}\px + \px u_{xx})f = u_{xx}f_x + (u_{xx}f)_x$.
Since $\delta \calH_1 / \delta u = -u_{xx}$, the variational formulation can be simplified to
\begin{align} \tag{HS2{$^\prime$}}\label{vari1}
u_{xxt} = - (u_{xx}\px + \px u_{xx}) u.
\end{align}
Based on this expression,
the $\calH_1$-preservation can be easily established under the boundary conditions $u(-L,t) = u_x(L,t)=u_x(-L,t) = u_{xx}(L,t)=0$:
\begin{align}
\frac{\rmd}{\rmd t}\calH_1
&= \int_{-L}^L u_x u_{xt} \, \rmd x 
= -  \int_{-L}^L uu_{xxt}\, \rmd x + [uu_{xt}]_{-L}^L
=\int_{-L}^L u \cdot (u_{xx}\px + \px u_{xx}) u \, \rmd x \\
&=
\int_{-L}^L \paren*{u u_{xx} u_x + u (u_{xx}u)_x}\,\rmd x
= 
\int_{-L}^L \paren*{u u_{xx} u_x - u_x u_{xx}u}\,\rmd x + [u u_{xx} u]_{-L}^L = 0,
\end{align}
where we used the chain rule for the first equality,
the substitution of \eqref{vari1} for the third equality, and the integration-by-parts twice.

Based on the expression \eqref{vari1}, the $\calH_1$-preserving finite difference scheme was derived in~\cite{mc16}:
\begin{align} \label{fds1}
\delta _x^{\langle 2\rangle} \frac{u_k^{(n+1)} - u_k^{(n)}}{\Delta t}
= - \paren*{\delta _x^{\langle 2\rangle}  u_k^{(n+\frac12)}} 
\paren*{\delta _x^{\langle 1\rangle}  u_k^{(n+\frac12)}}
- \delta _x^{\langle 1\rangle}  \paren*{u_k^{(n+\frac12)} \paren*{\delta _x^{\langle 2\rangle}  u_k^{(n+\frac12)}}},
\end{align}
with the discrete boundary conditions
$u_0^{(n)}=0$, $u_{-1}^{(n)} = u_1^{(n)}$, $u_{N+1}^{(n)} = u_{N-1}^{(n)}$
and $u_{N+2}^{(n)} = 2u_N^{(n)}-u_{N-2}^{(n)}$.\footnote{
We here use the following notation.
Space resp. time indices are denoted by $k$ resp. $n$.
Numerical solutions are denoted by $u_k^{(n)} \approx u(-L+k\Delta x, n \Delta t)$
on a uniform rectangular grid.
We frequently use the abbreviation: $u_k^{(n+\frac12)} = (u_k^{(n)} + u_k^{(n+1)})/2$.
The spatial domain $[-L,L]$ is partitioned so that $x_0 = -L$ and $x_N =L$.}
For the numerical solution of \eqref{fds1}, the discrete Hamiltonian
\begin{align}
\calH_{1,\rmd} (\bm{u}^{(n)}) := \sum_{k=0}^{N} {}^{\prime\prime} \frac{\Delta x}{2}
\frac{(\delta _x^+ u_k^{(n)})^2 + (\delta _x^- u_k^{(n)})^2}{2}
\end{align}
is constant independently of the time step $n$.\footnote{
The symbol
$\sum_{k=0}^{N} {}^{\prime\prime}$ stands for the trapezoidal rule:
$\sum_{k=0}^{N} {}^{\prime\prime} f_k \Delta x = \Delta x\left(
\frac12f_0 + \sum_{k=1}^{N-1} f_k +\frac12f_N \right)$.}

\begin{remark}
\label{rem1}
Under the boundary conditions $u(-L,t) = u_x(L,t)=u_x(-L,t) = u_{xx}(L,t)=0$,
the second Hamiltonian $\calH_2$ is not an invariant:
\begin{align*}
\frac{\rmd}{\rmd t} \calH_2 = \left. \frac{1}{2} u_t^2 \right| _{x=L}.
\end{align*}
This term does not vanish, unless the Dirichlet type condition is also imposed at the right boundary.
Note that adding the Dirichlet type condition makes sense only for the trivial solution $u(x,t)=0$.
\end{remark}

\subsection{The variational structure based on {\rm (\ref{hs1})}}
\label{subsec2.2}

A similar variational structure with respect to $\calH_1$ is also formulated based on the setting \eqref{hs1}:
\begin{align}
u_{xt} = \frac12(u\px + \px u) \px^{-1} \fracdel{\calH_1}{u}, \quad
\calH_1 (u,u_x) = \frac12 \int_{-L}^L u_x^2 \,\rmd x.
\end{align}
This formulation can also be simplified to 
\begin{align} \tag{HS1{$^\prime$}}\label{vari2}
u_{xt} = - \frac12 (u\px + \px u) u_x. 
\end{align}
Based on this expression,
the $\calH_1$-preservation can be established under the boundary conditions $u(-L,t) = u_x(L,t)(=u_x(-L,t) = u_{xx}(L,t))=0$:
\begin{align}
\frac{\rmd}{\rmd t}\calH_1
&= \int_{-L}^L u_x u_{xt} \, \rmd x 
=
-\frac12 \int_{-L}^L u_x u u_{xx}\,\rmd x - \frac12 \int_{-L}^L u_x (uu_x)_x\,\rmd x \\
&=
-\frac12 \int_{-L}^L u_x u u_{xx}\,\rmd x
+\frac12 \int_{-L}^L u_{xx} u u_x\,\rmd x - \frac12 \left[ uu_x^2 \right] _{-L}^L
=0.
\end{align}

In the next section, we present two weak forms
and corresponding $\calH_1$-preserving Galerkin schemes
 based on two variational structures \eqref{vari1} and \eqref{vari2}.

\section{Energy-preserving $H^1$-weak forms and their Galerkin discretizations}
\label{sec3}

In this section, two energy-preserving $H^1$-weak forms and their Galerkin discretizations
are presented.
For this aim, based on \eqref{vari1} or \eqref{vari2},
we present two $H^1$-weak forms that explicitly express
the desired $\calH_1$-preservation property,
and then discretize the weak forms so that the fully discrete schemes still keep the $\calH_1$-preservation property.

\subsection{$\calH_1$-preserving $H^1$-weak form and its Galerkin discretization
based on {\rm (\ref{vari1})}}

Let us define two function spaces $X_1$ and $X_2$ by
\begin{align}
X_1 := \{ v \in H^1 (-L,L) \ | \ v(-L) = 0\} , \quad X_2 := \{ v \in H^1 (-L,L) \ | \ v(L) = 0\} . 
\end{align}

Since the variational structure \eqref{vari1} contains the third order derivative $u_{xxx}$ in the right hand side,
we need to introduce at least one intermediate variable to consider an $H^1$ formulation.
By setting $q = u_{xx}$, we consider the following system
\begin{align*}
u_{xxt} &= -(q \px + \px q) u , \\
q &= u_{xx}.
\end{align*}
Based on this system, we consider the following weak form.\footnote{Hereafter,
we utilize the $L^2$ inner product $(f,g) = \int_{-L}^L fg\,\rmd x$.}

\begin{wf} \label{weakform}
Assume that $u(0,\cdot) $ is given in $X_1$. 
We find $u\in X_1$ and $q\in X_2$ such that, for any $v_1 \in X_1$ and $v_2 \in X_2$,
\begin{align}
\paren*{u_{xt},(v_1)_x} &= \paren*{qu_x + (qu)_x, v_1},  \label{weak1}\\
\paren*{q,v_2} &= - \paren*{u_x, (v_2)_x}. \label{weak2}
\end{align}
\end{wf}

For this weak form, the desired $\calH_1$-preservation is established.

\begin{prop} \label{prop:c}
Assume that $u_t\in X_1$.
The solution of Weak form~\ref{weakform} satisfies
\begin{align}
\frac{\rmd}{\rmd t} \calH_1 (u) = 0.
\end{align}
\end{prop}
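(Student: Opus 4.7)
The plan is to exploit the fact that $u$ itself lies in $X_1$, so we may use $u$ as a test function in the first equation of Weak form~\ref{weakform}. This is precisely the discrete-in-test-function analogue of multiplying the strong equation by $u$, which is how energy preservation is established for \eqref{vari1} in Section~\ref{sec2}.

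Concretely, first I would differentiate the energy and use the assumption $u_t \in X_1$ to justify the chain rule
\begin{align}
\frac{\rmd}{\rmd t} \calH_1 (u) = \int_{-L}^L u_x u_{xt} \, \rmd x = (u_{xt}, u_x).
\end{align}
Next, I would substitute $v_1 = u \in X_1$ into \eqref{weak1}, which gives $(v_1)_x = u_x$ and hence
\begin{align}
(u_{xt},u_x) = (qu_x + (qu)_x, u).
\end{align}
At this point the problem reduces to showing that the right-hand side vanishes. Writing it out and integrating by parts on the second term, I obtain
\begin{align}
(qu_x + (qu)_x, u) = \int_{-L}^L q u u_x \, \rmd x + \bigl[ q u^2 \bigr]_{-L}^L - \int_{-L}^L q u u_x \, \rmd x.
\end{align}

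The two bulk integrals cancel, and the boundary term vanishes because $u(-L) = 0$ (since $u \in X_1$) and $q(L) = 0$ (since $q \in X_2$). So the whole expression is zero and we conclude $\frac{\rmd}{\rmd t}\calH_1 (u) = 0$. I do not expect any real obstacle here; the key observation is that the function spaces $X_1$ and $X_2$ have been tailored so that $u$ is an admissible test function in \eqref{weak1} and the boundary contributions from the subsequent integration by parts are killed by the complementary vanishing of $q$ at $L$. The only subtle point is regularity: the assumption $u_t \in X_1$ is what allows $(u_{xt}, u_x)$ to be interpreted as an $L^2$ inner product and the differentiation under the integral sign to be valid, and it also ensures the boundary condition $u(-L,t)=0$ is preserved in time.
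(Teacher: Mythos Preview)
Your proof is correct and follows essentially the same route as the paper: differentiate $\calH_1$, substitute $v_1 = u \in X_1$ into \eqref{weak1}, integrate by parts on the $(qu)_x$ term, and use $u(-L)=0$, $q(L)=0$ to kill the boundary contribution. Your additional remarks on the role of the assumption $u_t \in X_1$ are accurate and add clarity.
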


\begin{proof}
We see
\begin{align}
\frac{\rmd}{\rmd t} \calH_1 (u) = \paren*{u_x,u_{xt}}
=\paren*{qu_x + (qu)_x, u}
= \paren*{qu_x,u} - \paren*{qu,u_x} + [qu^2]_{-L}^L = 0.
\end{align}
The first equality is just the chain rule.
The second follows from \eqref{weak1} with $v_1 = u\in X_1$.
The third is nothing but the integration-by-parts, and the last follows due to
$u(-L,t) = q(L,t)=0$.
\end{proof}

Next, we discretize Weak form~\ref{weakform} so that the resulting Galerkin scheme keeps the 
energy-preservation property.
Denoting appropriate finite dimensional subspaces of $X_1$ resp. $X_2$ by $X_{1,\rmd}$ resp. $X_{2,\rmd}$,
we consider the following Galerkin scheme.\footnote{
For Galerkin schemes,
numerical solutions are denoted by $u^{(n)} (\cdot) \approx u(\cdot, n \Delta t)$.
As is the case with the finite difference scheme, the abbreviation $u^{(n+\frac12)}  = (u^{(n)} + u^{(n+1)})/2$
is used.
}

\begin{scheme} \label{scheme1}
Suppose that $u^{(0)}$ is given in $X_{1,\rmd}$.
We find $u^{(n+1)} \in X_{1,\rmd}$ and $q^{(n+\frac12)}\in X_{2,\rmd}$ 
such that, for any $v_1 \in X_{1,\rmd}$ and $v_2 \in X_{2,\rmd}$,
\begin{align}
\paren*{\frac{u_x^{(n+1)} - u_x^{(n)}}{\Delta t},(v_1)_x} &= 
\paren*{q^{(n+\frac12)} u_x^{(n+\frac12)} + \paren*{q^{(n+\frac12)} u^{(n+\frac12)}}_x, v_1},  \label{scheme1-1}\\
\paren*{q^{(n+\frac12)},v_2} &= - \paren*{u_x^{(n+\frac12)}, (v_2)_x}. \label{scheme2-12}
\end{align}
\end{scheme}

\begin{prop}
The solution of Scheme~\ref{scheme1} satisfies
\begin{align}
 \int_{-L}^L \frac12 \paren*{u_x^{(n+1)}}^2 \,\rmd x=  \int_{-L}^L \frac12 \paren*{u_x^{(n)}}^2 \,\rmd x.
\end{align}
\end{prop}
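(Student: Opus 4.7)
The plan is to mimic the continuous proof of Proposition~\ref{prop:c} at the discrete level, with the chain rule replaced by the elementary identity $(a-b)(a+b) = a^2 - b^2$. First I would observe that $X_{1,\rmd}$ is a linear subspace, so since both $u^{(n)}$ and $u^{(n+1)}$ lie in $X_{1,\rmd}$, the midpoint $u^{(n+\frac12)} = (u^{(n)}+u^{(n+1)})/2$ does as well, and is therefore an admissible test function in \eqref{scheme1-1}.

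Setting $v_1 = u^{(n+\frac12)}$, the left-hand side of \eqref{scheme1-1} collapses, by the identity above applied pointwise in $x$ to $u_x$, into the forward difference quotient $(\calH_1(u^{(n+1)}) - \calH_1(u^{(n)}))/\Delta t$ of the discrete energy.

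It then remains to show that the right-hand side vanishes for this choice of test function. This is the exact discrete analogue of the third-through-last equalities in the proof of Proposition~\ref{prop:c}: an integration-by-parts applied to $\paren*{(q^{(n+\frac12)} u^{(n+\frac12)})_x, u^{(n+\frac12)}}$ produces a volume term that cancels $\paren*{q^{(n+\frac12)} u_x^{(n+\frac12)}, u^{(n+\frac12)}}$, while the surviving boundary term vanishes because $u^{(n+\frac12)}(-L) = 0$ (from membership in $X_{1,\rmd}\subset X_1$) and $q^{(n+\frac12)}(L) = 0$ (from membership in $X_{2,\rmd}\subset X_2$). Multiplying through by $\Delta t$ then gives the claim.

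Essentially no real obstacle is expected. Note that the second relation \eqref{scheme2-12} is \emph{not} invoked in this argument; it serves only to couple $q^{(n+\frac12)}$ to $u^{(n+\frac12)}$ and close the system. The whole design of the scheme — the midpoint evaluation of the nonlinearity together with the difference quotient of $u_x$ on the time-derivative side — was chosen precisely so that the test-function choice $v_1 = u^{(n+\frac12)}$ reproduces the continuous energy identity verbatim.
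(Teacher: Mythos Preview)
Your proposal is correct and matches the paper's own proof essentially line for line: factorize the energy difference via $(a-b)(a+b)=a^2-b^2$, test \eqref{scheme1-1} with $v_1=u^{(n+\frac12)}\in X_{1,\rmd}$, integrate by parts, and kill the boundary term using $u^{(n+\frac12)}(-L)=0$ and $q^{(n+\frac12)}(L)=0$. Your observation that \eqref{scheme2-12} is never invoked is also accurate.
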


\begin{proof}
The proof is similar to that of Proposition~\ref{prop:c}. We see
\begin{align*}
&\frac{1}{\Delta t} \paren*{ \int_{-L}^L \frac12 \paren*{u_x^{(n+1)}}^2 \,\rmd x - \int_{-L}^L \frac12 \paren*{u_x^{(n)}}^2 \,\rmd x}\\
&=
\paren*{\frac{u_x^{(n+1)} - u_x^{(n)}}{\Delta t},\paren*{\underbrace{\frac{u^{(n+1)} + u^{(n)}}{2}}_{=u^{(n+\frac12)}}}_x} \\
&=
\paren*{q^{(n+\frac12)} u_x^{(n+\frac12)} + \paren*{q^{(n+\frac12)} u^{(n+\frac12)}}_x, u^{(n+\frac12)}} \\
&=
\paren*{q^{(n+\frac12)} u_x^{(n+\frac12)}, u^{(n+\frac12)}} 
-
\paren*{q^{(n+\frac12)} u^{(n+\frac12)}, u_x^{(n+\frac12)}} 
+ \left[ q^{(n+\frac12)} \paren*{u^{(n+\frac12)}}^2 \right]_{-L}^L =0.
\end{align*}
Note that, though the chain rule is replaced by just the factorization for the first equality,
the subsequent procedures are almost the same as the proof of Proposition~\ref{prop:c}.
\end{proof}

\subsection{$\calH_1$-preserving $H^1$-weak form and its Galerkin discretization
based on {\rm (\ref{vari2})}}

Let us seek another possibility of constructing an $\calH_1$-preserving $H^1$-Galerkin scheme
based on the variational structure \eqref{vari2}.
In this case, though \eqref{vari2} contains only up to second order derivatives,
it seems necessary to introduce an intermediate variable to establish the $\calH_1$-preservation.
By setting $r=u_x$, we divide the HS into a system
\begin{align*}
u_{xt} &= -\frac12 (u\px + \px u)r ,\\
r&= u_x,
\end{align*}
and consider the following weak form.

\begin{wf} \label{weakform2}
Assume that $u(0,\cdot) $ is given in $X_1$. 
We find $u\in X_1$ and $r\in X_2$ such that, for any $v_1 \in X_2$ and $v_2 \in X_1$,
\begin{align}
\paren*{u_{xt},v_1} &= -\frac12 \paren*{ur_x + (ur)_x, v_1},  \label{weak2-1}\\
\paren*{r,(v_2)_x} &=  \paren*{u_x, (v_2)_x}. \label{weak2-2}
\end{align}
\end{wf}

Note that we here require the test functions $v_1$ and $v_2$ are in $X_2$ and $X_1$, respectively,
in order to ensure the $\calH_1$-preservation.

\begin{prop} \label{prop:c2}
Assume that $u_t\in X_1$.
The solution of Weak form~\ref{weakform2} satisfies
\begin{align}
\frac{\rmd}{\rmd t} \calH_1 (u) = 0.
\end{align}
\end{prop}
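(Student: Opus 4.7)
The plan is to follow the same template as the proof of Proposition~\ref{prop:c}: differentiate $\calH_1$, substitute the weak form to replace the time derivative, and then evaluate the resulting spatial integral via integration-by-parts so that only a boundary term remains, which the boundary conditions knock out.

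First I would write $\frac{\rmd}{\rmd t}\calH_1(u) = (u_x,u_{xt})$ by the chain rule. The key observation is that I cannot directly use $u_x$ as a test function in \eqref{weak2-1} because $u_x$ need not lie in $X_2$; instead I need to route through the auxiliary variable $r$. This is where \eqref{weak2-2} enters: since $u_t \in X_1$ by assumption, I may take $v_2 = u_t$ in \eqref{weak2-2}, which gives $(r,u_{xt}) = (u_x,u_{xt})$. Hence $\frac{\rmd}{\rmd t}\calH_1(u) = (r,u_{xt})$.

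Next, I take $v_1 = r$ in \eqref{weak2-1} (which is legitimate precisely because the weak form is set up with $v_1 \in X_2$ and $r \in X_2$---this is the point of the design choice noted just before the proposition). This yields
\begin{align*}
\frac{\rmd}{\rmd t}\calH_1(u) = -\frac12\paren*{u r_x + (ur)_x, r}.
\end{align*}
Now I would expand: integration-by-parts on $((ur)_x,r)$ produces $[ur^2]_{-L}^L - (ur,r_x)$, and the $(ur,r_x)$ piece exactly cancels $(ur_x,r)$. What survives is the single boundary term $-\tfrac12[ur^2]_{-L}^L$, which vanishes because $u(-L)=0$ (from $u\in X_1$) and $r(L)=0$ (from $r\in X_2$).

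The only real choice to make in this proof is the pairing of test-function assignments, and that is essentially dictated by the boundary spaces: $v_2 = u_t$ forces the assumption $u_t \in X_1$, while $v_1 = r$ is what forces the asymmetric requirement $v_1 \in X_2$ in Weak form~\ref{weakform2}. Once those selections are made, the cancellation is automatic and the boundary term vanishes exactly as in Proposition~\ref{prop:c}, so I do not anticipate any genuine obstacle beyond bookkeeping.
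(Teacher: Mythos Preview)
Your proposal is correct and follows essentially the same approach as the paper: differentiate $\calH_1$, use \eqref{weak2-2} with $v_2=u_t$ to replace $u_x$ by $r$, substitute \eqref{weak2-1} with $v_1=r$, and finish with integration-by-parts and the boundary conditions $u(-L)=r(L)=0$. Your explicit remark that one cannot test \eqref{weak2-1} directly with $u_x$ (forcing the detour through $r$) is a good articulation of the reason the test-function spaces in Weak form~\ref{weakform2} are set up asymmetrically.
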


\begin{proof}
\begin{align}
\frac{\rmd}{\rmd t} \calH_1 (u) = \paren*{u_x,u_{xt}}
=\paren*{u_{xt},r}
= -\frac12 \paren*{ur_x,r} - \frac12\paren*{(ur)_x,r}
= -\frac12 \paren*{ur_x,r} + \frac12 \paren*{ur,r_x} - \frac12 \left[ ur^2 \right]_{-L}^L 
= 0.
\end{align}
The second follows from \eqref{weak2-2} with $v_2 = u_t\in X_1$.
The third is due to \eqref{weak2-1} with $v_1=r\in X_2$,
the forth is obtained by the integration-by-parts, and the last follows due to
$u(-L,t) = r(L,t)=0$.
\end{proof}

We now present an $\calH_1$-preserving Galerkin scheme based on Weak form~\ref{weakform2}.

\begin{scheme} \label{scheme2}
Suppose that $u^{(0)}$ is given in $X_{1,\rmd}$.
We find $u^{(n+1)} \in X_{1,\rmd}$ and $r^{(n+\frac12)}\in X_{2,\rmd}$ 
such that, for any $v_1 \in X_{2,\rmd}$ and $v_2 \in X_{1,\rmd}$,
\begin{align}
\paren*{\frac{u_x^{(n+1)} - u_x^{(n)}}{\Delta t},(v_1)_x} &= 
 -\frac12 \paren*{u^{(n+\frac12)} r_x^{(n+\frac12)} + \paren*{u^{(n+\frac12)} r^{(n+\frac12)}}_x, v_1},  \label{scheme2-1}\\
\paren*{r^{(n+\frac12)},(v_2)_x} &=  \paren*{u_x^{(n+\frac12)}, (v_2)_x}. \label{scheme2-2}
\end{align}
\end{scheme}

\begin{prop}
The solution of Scheme~\ref{scheme2} satisfies
\begin{align}
 \int_{-L}^L \frac12 \paren*{u_x^{(n+1)}}^2 \,\rmd x=  \int_{-L}^L \frac12 \paren*{ u_x^{(n)}}^2 \,\rmd x.
\end{align}
\end{prop}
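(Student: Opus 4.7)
The plan is to mirror the structure of Proposition \ref{prop:c2} (the continuous proof for Weak form \ref{weakform2}) at the fully discrete level, with one extra bookkeeping step needed to substitute $r^{(n+\frac12)}$ for $u_x^{(n+\frac12)}$ via the second equation of the scheme.

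First I would replace the chain rule of the continuous proof with the algebraic factorization used in the analogous proof for Scheme \ref{scheme1}, namely $\frac12(a^2-b^2)=\frac12(a+b)(a-b)$ applied pointwise with $a=u_x^{(n+1)}$ and $b=u_x^{(n)}$. Combined with $u_x^{(n+\frac12)}=(u^{(n+\frac12)})_x$, this yields
\begin{align*}
\frac{1}{\Delta t}\paren*{\int_{-L}^L \tfrac12 (u_x^{(n+1)})^2\,\rmd x-\int_{-L}^L \tfrac12 (u_x^{(n)})^2\,\rmd x}
=\paren*{u_x^{(n+\frac12)},\frac{u_x^{(n+1)}-u_x^{(n)}}{\Delta t}}.
\end{align*}

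Next I would use \eqref{scheme2-2} to trade $u_x^{(n+\frac12)}$ for $r^{(n+\frac12)}$ in the right slot. Since $X_{1,\rmd}$ is a linear subspace, $v_2:=u^{(n+1)}-u^{(n)}\in X_{1,\rmd}$ is an admissible test function, and $(v_2)_x=u_x^{(n+1)}-u_x^{(n)}$. Applying \eqref{scheme2-2} with this $v_2$ (and dividing by $\Delta t$) gives the discrete analogue of the step $(u_x,u_{xt})=(u_{xt},r)$ in the continuous proof, so that the quantity above equals $(r^{(n+\frac12)},\partial_t^\Delta u_x)$, where I write $\partial_t^\Delta u_x=(u_x^{(n+1)}-u_x^{(n)})/\Delta t$ for brevity.

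Then I would plug $v_1=r^{(n+\frac12)}\in X_{2,\rmd}$ into \eqref{scheme2-1} to rewrite the resulting expression as $-\tfrac12\paren*{u^{(n+\frac12)}r_x^{(n+\frac12)}+(u^{(n+\frac12)}r^{(n+\frac12)})_x,\,r^{(n+\frac12)}}$, and perform integration by parts on the second summand exactly as in the continuous argument. The two integrals cancel, leaving only the boundary term $-\tfrac12[u^{(n+\frac12)}(r^{(n+\frac12)})^2]_{-L}^L$, which vanishes because $u^{(n+\frac12)}(-L)=0$ (from $u^{(n)},u^{(n+1)}\in X_{1,\rmd}$) and $r^{(n+\frac12)}(L)=0$ (from $r^{(n+\frac12)}\in X_{2,\rmd}$).

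There is no real obstacle beyond carefully tracking which space each test function lives in: the main subtlety, compared with the Scheme \ref{scheme1} proof, is that one cannot directly test \eqref{scheme2-1} with $u^{(n+\frac12)}$ as in the continuous computation, but must first invoke \eqref{scheme2-2} to convert $u_x^{(n+\frac12)}$ into $r^{(n+\frac12)}\in X_{2,\rmd}$, which is precisely why the space requirements $v_1\in X_{2,\rmd}$ and $v_2\in X_{1,\rmd}$ (reversed from Weak form \ref{weakform}) are imposed in the definition of the scheme.
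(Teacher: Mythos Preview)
Your proposal is correct and follows exactly the paper's own argument: factorize in place of the chain rule, apply \eqref{scheme2-2} with $v_2=(u^{(n+1)}-u^{(n)})/\Delta t\in X_{1,\rmd}$ to replace $u_x^{(n+\frac12)}$ by $r^{(n+\frac12)}$, then test \eqref{scheme2-1} with $v_1=r^{(n+\frac12)}\in X_{2,\rmd}$ and integrate by parts so that the two bulk terms cancel and the boundary term vanishes. Your remark that this extra substitution step (absent in the Scheme~\ref{scheme1} proof) is precisely what forces the reversed choice of test spaces is also on point.
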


\begin{proof}
The proof is similar to that of Proposition~\ref{prop:c2}.
\begin{align*}
&\frac{1}{\Delta t} \paren*{ \int_{-L}^L \frac12 \paren*{u_x^{(n+1)}}^2 \,\rmd x - \int_{-L}^L \frac12 \paren*{u_x^{(n)}}^2 \,\rmd x}\\
&=
\paren*{\frac{u_x^{(n+1)} - u_x^{(n)}}{\Delta t},\paren*{\underbrace{\frac{u^{(n+1)} + u^{(n)}}{2}}_{=u^{(n+\frac12)}}}_x} \\
&=
\paren*{\frac{u_x^{(n+1)} - u_x^{(n)}}{\Delta t},r^{(n+\frac12)}} \\
&=
-\frac12
\paren*{u^{(n+\frac12)} r_x^{(n+\frac12)} + \paren*{u^{(n+\frac12)} r^{(n+\frac12)}}_x, r^{(n+\frac12)}} \\
&=
-\frac12
\paren*{u^{(n+\frac12)} r_x^{(n+\frac12)}, r^{(n+\frac12)}} 
+ \frac12
\paren*{u^{(n+\frac12)} r^{(n+\frac12)}, r_x^{(n+\frac12)}} 
-\frac12 \left[ u^{(n+\frac12)} \paren*{r^{(n+\frac12)}}^2 \right]_{-L}^L =0.
\end{align*}
Note that, though the chain rule is replaced by the factorization for the first equality,
the subsequent procedures are almost the same as Proposition~\ref{prop:c2}.
\end{proof}

\subsection{Implementation}
We make a brief comment on the implementation of the schemes.

We denote the basis functions of $X_{1,\rmd}$ and $X_{2,\rmd}$ by
$\phi _k (x)$ ($k=0,\dots,M-1$) and $\varphi _k (x)$ ($k=0,\dots,N-1$).
We introduce the coefficient vectors
$\bm{u}^{(n)} = (u_0^{(n)}, u_1^{(n)},\dots, u_{M-1}^{(n)} )^\top \in \bbR^M$ and
$\bm{q}^{(n+\frac12)} = (q_0^{(n+\frac12)}, q_1^{(n+\frac12)},\dots, q_{N-1}^{(n+\frac12)} )^\top \in\bbR^N$ for
the functions $u^{(n)} =\sum_{k=0}^{M-1}u_k^{(n)} \phi_x \in X_{1,\rmd}$ and
$q^{(n)} =\sum_{k=0}^{N-1}q_k^{(n)} \varphi_x \in X_{2,\rmd}$, respectively.
By using the matrices
\begin{align} \label{comat}
A_{ij} = \paren*{(\phi_i)_x,(\phi_j)_x} , \quad 
B_{ij} = \paren*{\varphi_i,\varphi_j} , \quad
C_{ij} = \paren*{(\phi_i)_x,(\varphi_j)_x} ,
\end{align}
Scheme~\ref{scheme1} can be written as
\begin{align}
A \paren*{\frac{\bm{u}^{(n+1)} - \bm{u}^{(n)}}{\Delta t}} &= g_1\paren*{\bm{u} ^{(n+\frac12)}, \bm{q} ^{(n+\frac12)}}, \\
B \bm{q} ^{(n+\frac12)} &= -C^\top \bm{u} ^{(n+\frac12)},
\end{align}
where $g_1$ denotes the nonlinear part.
But, since the matrix $B$ is invertible, the two equations immediately reduce to
\begin{align}
A \paren*{\frac{\bm{u}^{(n+1)} - \bm{u}^{(n)}}{\Delta t}} &= g_1\paren*{\bm{u} ^{(n+\frac12)}, -B^{-1}C^\top\bm{u} ^{(n+\frac12)}}, \label{mat1}
\end{align}
which is a nonlinear system for $\bm{u}^{(n+1)}$.
Therefore, we need not compute the intermediate variable $\bm{q} ^{(n+\frac12)}$,
and the dimension of the system to be solved is reduced from $M+N$ to $M$.

\begin{remark}\label{rem:cc}
An $\calH_1$-preserving $H^1$-Galerkin scheme can also be derived based on the 
system \eqref{xswf}.
Similar to the above discussion, we see that the dimension of the system to be solved is reduced to
the dimension of the function space to which $u^{(n)}$ belong.
However, in the reduced system, there are at least three matrix inverses,
which requires extra effort for function evaluations. 
\end{remark}

For Scheme~\ref{scheme2},
we also define a matrix $D$ 
by $D_{ij} = \paren*{\varphi_i , (\phi_j)_x}$.
We assume $M=N$ and the matrix $D$ is nonsingular.
Then, Scheme~\ref{scheme2} can be written as
\begin{align}
D \paren*{\frac{\bm{u}^{(n+1)} - \bm{u}^{(n)}}{\Delta t}} &= g_2\paren*{\bm{u} ^{(n+\frac12)}, \bm{r} ^{(n+\frac12)}}, \\
D^\top \bm{r} ^{(n+\frac12)} &= A \bm{u} ^{(n+\frac12)},
\end{align}
where $g_2$ denotes the nonlinear part.
The two equations immediately reduce to
\begin{align}
D \paren*{\frac{\bm{u}^{(n+1)} - \bm{u}^{(n)}}{\Delta t}} &= g_2\paren*{\bm{u} ^{(n+\frac12)}, D^{-\top}A\bm{u} ^{(n+\frac12)}}, \label{mat2}
\end{align}
which is of size $M$, as is the case with \eqref{mat1}.
However, the computational complexity is much reduced compared to \eqref{mat1}.
For simplicity, let us assume that
$X_{1,\rmd}$ and $X_{2,\rmd}$
consists of P1 elements, and $\phi_k$ and $\varphi_k$ are standard basis functions.
In this case, while the matrix $A$ is a tridiagonal matrix whose elements depend on the grid points,
the matrix $D$ is given by
\begin{align}
D = \frac12 \begin{bmatrix}
1 &  &  &  &  &  &\\ 
0 & 1 &  &  &  &  &\\ 
-1 & 0 & \ddots &  & &  &\\ 
 & -1 & \ddots & \ddots & & &\\ 
 &  & \ddots & \ddots & \ddots & & \\ 
 &  &  & -1 & 0 & 1 & \\
 & & & & -1 & 0& 1
\end{bmatrix},
\end{align} 
and thus its inverse $D^{-1}$ can be explicitly given in advance.
Hence, though the systems \eqref{mat1} and \eqref{mat2} are both of size $M$,
the latter can be implemented easily,
in the sense that the function evaluation and the computation of the Jacobian of $g_2$ are straightforward,
and the computational cost is also reduced.

\section{Numerical experiments}
\label{sec4}

In this section, through some numerical experiments, 
we compare three schemes: the finite difference scheme~\eqref{fds1},
Scheme~\ref{scheme1} and Scheme~\ref{scheme2}.
All the computations were done in the computation environment:
3.5 GHz Intel Core i5, 8GB memory, OS X 10.10.5.
We used MATLAB (R2015a).
Nonlinear equations were solved by \emph{fsolve} with tolerance $10^{-16}$.
As a reference solution, we consider the following exact solution to the HS
\begin{align}
u(x,t) = \begin{cases}
0 & \text{if} \quad x\leq 0 \\
x/(0.5t+1) & \text{if} \quad 0<x<(0.5t+1)^2 \\
0.5t + 1 & \text{if}  \quad (0.5t+1)^2\leq x .
\end{cases}
\end{align}
This is a solution for not only the original HS \eqref{eq:hs} but also the HS on the bounded domain $x\in [-L,L]$ in the time interval $t\in [0,2(\sqrt{L}-1))$.

Figs.~\ref{fig-fd},~\ref{fig-galerkin1}~and~\ref{fig-galerkin2} show 
the numerical results obtained by the finite difference scheme~\eqref{fds1},
Scheme~\ref{scheme1} and Scheme~\ref{scheme2}, respectively.
For the Galerkin schemes, we used uniform meshes and employed the P1 elements.
For all schemes,
the numerical profiles of $u(x,t)$ seem to agree with the exact one.
However, there are some minor differences.
The numerical solution of the finite difference scheme~\eqref{fds1} is more smooth around
$x=2.25$ than Scheme~\ref{scheme1}.
For Scheme~\ref{scheme2}, there are small oscillations in the region $x\in[2.25,6]$.
These differences can be understood from the numerical profiles of $u_x(x,t)$.
For the finite difference scheme~\eqref{fds1} and Scheme~\ref{scheme2},
large oscillations are observed. 
They are not eliminated even for smaller mesh sizes.
It should be noted that the oscillation obtained by Scheme~\ref{scheme1} is bounded small enough
despite special techniques, such as upwinding flux for discontinuous Galerkin methods, are not employed.
For all schemes, the Hamiltonian is well preserved. For Scheme~\ref{scheme1},
results by explicit and implicit Euler methods with the same spatial discretization are also plotted
for the sake of comparison.
The explicit integrator is unstable, and 
the energy dissipation of the implicit integrator indicates that the qualitative behaviour is deteriorated
as time passes.

We also compare the schemes from the viewpoint of the error growth by means of the $L^\infty$ norm in Fig.~\ref{fig:uerror}.
In the left and central figures, results by all schemes with the use of uniform mesh are plotted.
In the right figure, we compare the use of uniform mesh with non-uniform mesh for Scheme~\ref{scheme1}.
It is observed that the use of non-uniform mesh significantly reduces the $L^\infty$ error. 

These numerical results indicate that Scheme~\ref{scheme1} is better than Scheme~\ref{scheme2}.

\begin{figure}
\centering
\includegraphics{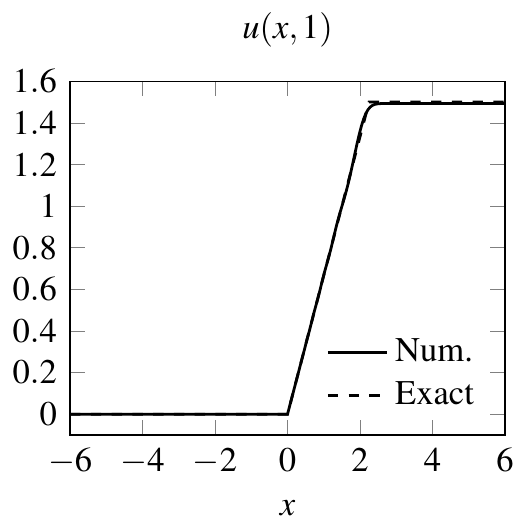}
\includegraphics{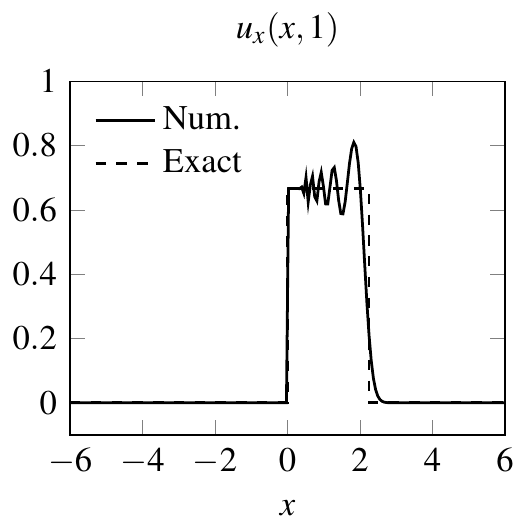}
\includegraphics{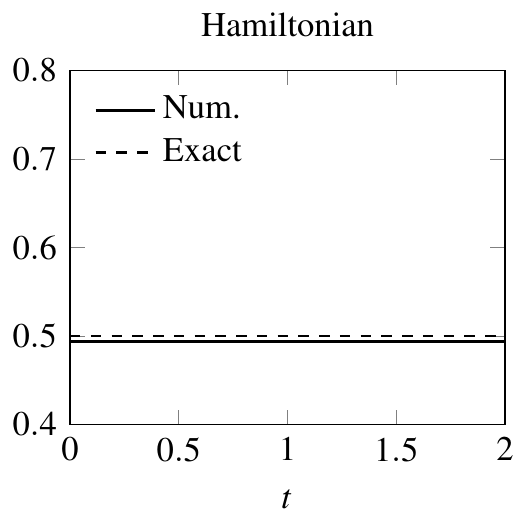}
\caption{Scheme~\protect\eqref{fds1}: exact and numerical profiles of $u(x,t)$ and $u_x(x,t)$ at time $T_{\mathrm{end}} = 1$
and computed Hamiltonian ($\Delta x = 12/200$ and $\Delta t=0.01$).}
\label{fig-fd}
\end{figure}

\begin{figure}
\centering
\includegraphics{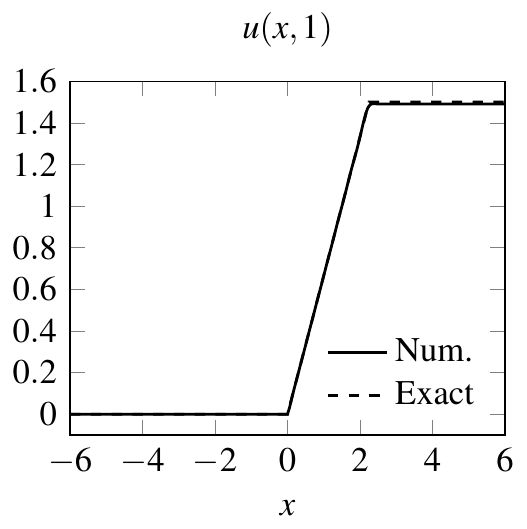}
\includegraphics{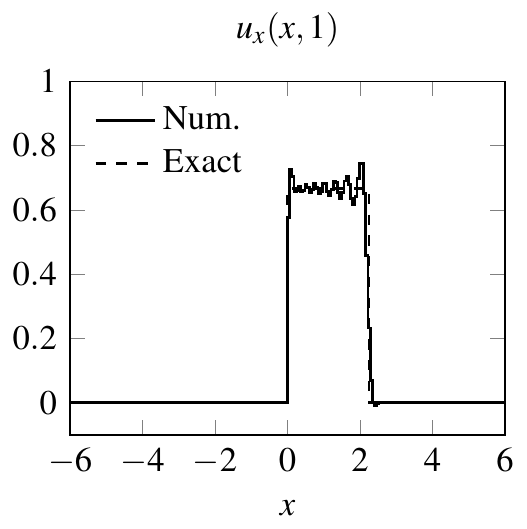}
\includegraphics{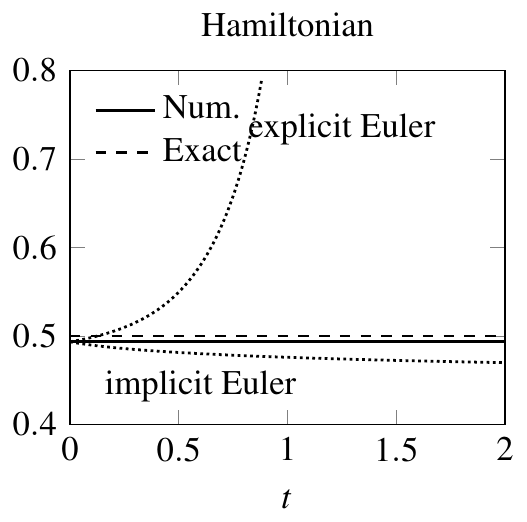}
\caption{Scheme~\protect\ref{scheme1}: exact and numerical profiles of $u(x,t)$ and $u_x(x,t)$ at time $T_{\mathrm{end}} = 1$
and computed Hamiltonian ($\Delta x = 12/200$ and $\Delta t=0.01$).
In the right figure, results by explicit and implicit Euler methods with the same spatial discretization are also presented.}
\label{fig-galerkin1}
\end{figure}

\begin{figure}
\centering
\includegraphics{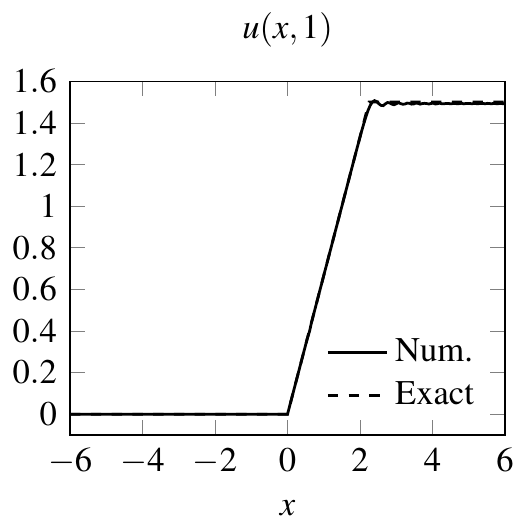}
\includegraphics{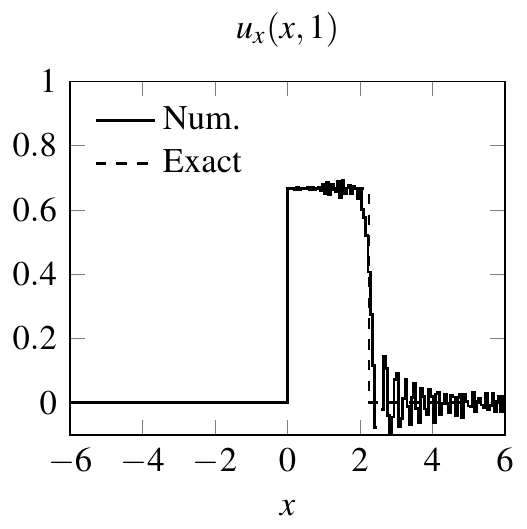}
\includegraphics{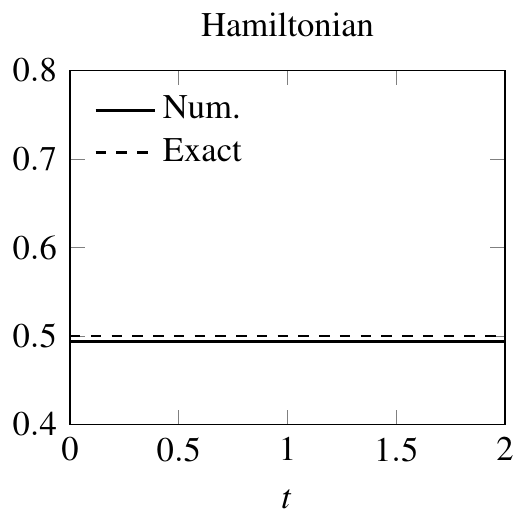}
\caption{Scheme~\protect\ref{scheme2}: exact and numerical profiles of $u(x,t)$ and $u_x(x,t)$ at time $T_{\mathrm{end}} = 1$
and computed Hamiltonian ($\Delta x = 12/200$ and $\Delta t=0.01$).}
\label{fig-galerkin2}
\end{figure}

\begin{figure}
\centering
\includegraphics{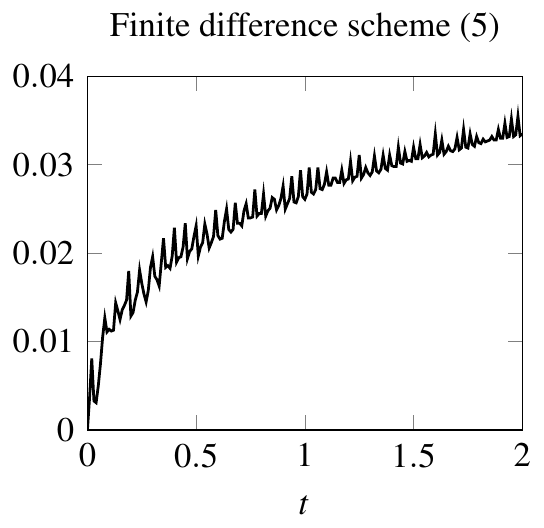}
\includegraphics{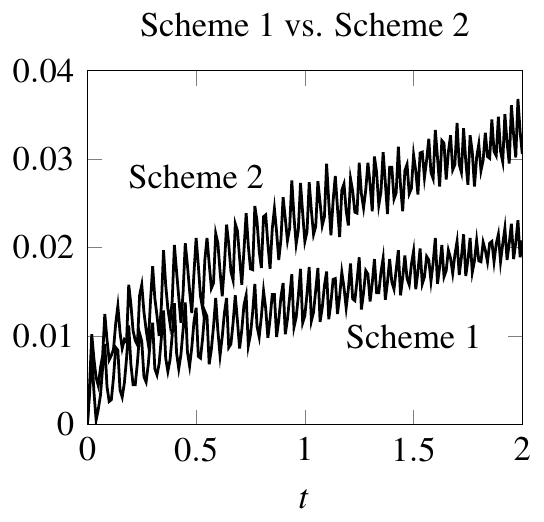}
\includegraphics{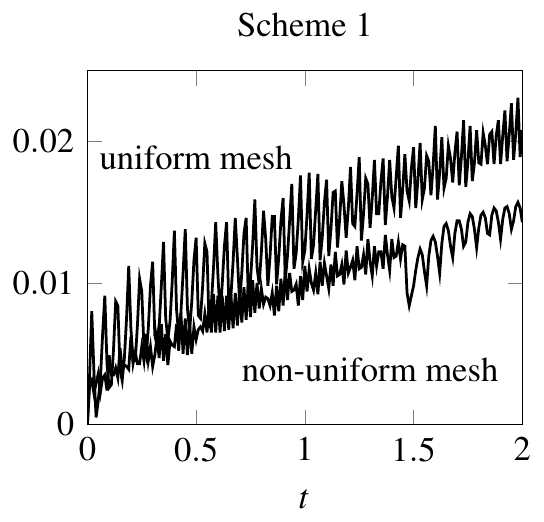}
\caption{$L^\infty$ errors ($\Delta t = 0.01$).
In the left and central figures, results by all schemes with uniform mesh ($\Delta x = 12/200$) are plotted.
In the right figure, results by Scheme~\protect\ref{scheme1} with uniform mesh and non-uniform mesh
(Fig.~\protect\ref{fig-grid}) are compared.}
\label{fig:uerror}
\end{figure}

\begin{figure}
\centering
\includegraphics{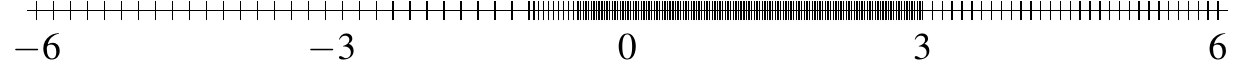}
\caption{Non-uniform mesh $N=200$.}
\label{fig-grid}
\end{figure}

\section{Concluding remarks}
\label{sec5}

We presented two $\calH_1$-preserving $H^1$-weak forms and their Galerkin discretizations for the HS equation.
Even if we use uniform meshes,
the first Galerkin scheme (Scheme~\ref{scheme1}) seems superior to the finite difference scheme~\eqref{fds1} and
Scheme~\ref{scheme2}.
Results obtained by Scheme~\ref{scheme1} can be further improved by using the non-uniform meshes.

For Scheme~\ref{scheme1}, the numerical profile of $u_x$ 
was much better than usually expected to non-smooth solutions.
Mechanism behind this phenomena should be investigated with more numerical experiments
or from more theoretical viewpoints.

As far as the authors' knowledge, in the context of energy-preserving Galerkin methods,
finding an intended weak form has been a main subject, and
there are few studies where several energy-preserving weak forms and their Galerkin discretizations
are compared.
It is hoped that the fact that substantial differences were observed for the HS
offers a new research direction in the literature.

\bibliographystyle{plain}
\bibliography{bibhs}
\end{document}